\documentclass{article}

\usepackage{arxiv}
\usepackage[numbers]{natbib}

\usepackage[utf8]{inputenc} 
\usepackage[T1]{fontenc}    
\usepackage{hyperref}       
\usepackage{url}            
\usepackage{booktabs}       
\usepackage{amsfonts}       
\usepackage{nicefrac}       
\usepackage{microtype}      
\usepackage{lipsum}		
\usepackage{graphicx}
\usepackage{natbib}
\usepackage{doi}

\usepackage{graphicx}%
\usepackage{multirow}%
\usepackage{amsmath,amssymb,amsfonts}%
\usepackage{amsthm}%
\usepackage{mathrsfs}%
\usepackage[title]{appendix}%
\usepackage{xcolor}%
\usepackage{textcomp}%
\usepackage{manyfoot}%
\usepackage{booktabs}%
\usepackage{algorithm}%
\usepackage{algorithmicx}%
\usepackage{algpseudocode}%
\usepackage{listings}%
\usepackage{enumerate}

\theoremstyle{thmstyleone}%
\newtheorem{theorem}{Theorem}%
\newtheorem{proposition}[theorem]{Proposition}%
\newtheorem{conjecture}[theorem]{Conjecture}%
\newtheorem{corollary}[theorem]{Corollary}%

\theoremstyle{thmstyletwo}%

\theoremstyle{thmstylethree}%
\newtheorem{definition}{Definition}%

\title{Bounds for the Permutation Flowshop Scheduling Problem: New Framework and Theoretical Insights}


\author{ \href{https://orcid.org/0009-0007-8014-204X}{\includegraphics[scale=0.06]{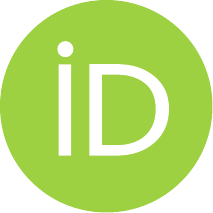}\hspace{1mm}J.A. Alejandro-Soto} \\
        Área de Computación\\
        Centro de Investigación en Matemáticas (CIMAT)\\
        Guanajuato, México\\
	\texttt{jose.alejandro@cimat.mx} \\
	\And
    \href{https://orcid.org/0000-0002-5431-5927}{\includegraphics[scale=0.06]{orcid.pdf}\hspace{1mm}Carlos Segura} \\
	Área de Computación\\
        Centro de Investigación en Matemáticas (CIMAT)\\
        Guanajuato, México\\
	\texttt{carlos.segura@cimat.mx} \\
    \And
	\href{https://orcid.org/0000-0001-9326-7713}{\includegraphics[scale=0.06]{orcid.pdf}\hspace{1mm}Joel Antonio Trejo-Sanchez} \\
	Unidad Mérida\\
	Centro de Investigación en Matemáticas (CIMAT)\\
	Yucatán, México \\
	\texttt{joel.trejo@cimat.mx} \\
}



\hypersetup{
pdftitle={A template for the arxiv style},
pdfsubject={q-bio.NC, q-bio.QM},
pdfauthor={David S.~Hippocampus, Elias D.~Striatum},
pdfkeywords={First keyword, Second keyword, More},
}

\begin{document}
\maketitle

\begin{abstract}
In this work, we use the matrix formulation of the Permutation Flowshop Scheduling Problem with makespan minimization 
to derive an upper bound and a general framework for obtaining lower bounds.  
The proposed framework involves solving a min-max or max-min expression over a set of paths.  
We introduce a family of such path sets for which the min-max expression can be solved in polynomial time under certain bounded parameters. 
To validate the proposed approach, we test it on the Taillard and VRF benchmark instances, the two most widely used datasets in PFSP research. Our method improves the bounds in 112 out of the 120 Taillard instances and 430 out of the 480 VRF instances. These improvements include both small and large instances, highlighting the scalability of the proposed methodology. 
Additionally, the upper bound is used to give a more accurate estimate of the number of possible makespan values for a given instance and to present 
asymptotic results which provide advances in a conjecture given by Taillard related to the quality of one of the most popular lower bounds, as well as the asymptotic approximation ratio of any algorithm. 
\end{abstract}

\keywords{Scheduling \and Permutation Flowshop \and Makespan \and Lower bound \and Taillard's conjecture}

\section{Introduction}

Scheduling is a fundamental area of study in the field of operations research. 
In this work, we focus on the Permutation Flowshop Scheduling Problem (PFSP) with makespan minimization. 
For a comprehensive introduction to scheduling, the reader is referred to~\cite{pinedo-2022}. 
The PFSP consists of $N$ jobs to be processed on $M$ machines arranged in series. 
For convenience, jobs are numbered from $1$ to $N$ and machines from $1$ to $M$. 
Each job must be processed in order from machine $1$ to machine $M$ and each machine must process the jobs in the same order without interruption.  
The processing time of job $j$ on machine $i$ is denoted by $t_{i,j}$, and these times are assumed to be constant and non-negative.  
Thus, in the PFSP, a permutation of the jobs $\pi = \{\pi_1, \dots, \pi_N \}$ represents a complete and valid schedule. 
The objective is to minimize the makespan, defined as the time at which the last machine finishes processing the last job and is denoted as $C_{max}(\pi)$.
Let $C_{i, j}$ be the time at which machine $i$ finishes processing job $j$; then, for a given permutation $\pi$
\begin{small}
\begin{align}
    C_{1, \pi_1} &= t_{1, \pi_1} \notag\\
    C_{i, \pi_1} &= t_{i, \pi_1} + C_{i-1, \pi_1} && \text{For $1 < i \leq M$} \notag\\
    C_{1, \pi_j} &= t_{1, \pi_j} + C_{1, \pi_{j-1}} && \text{For $1 < j \leq N$} \notag\\
    C_{i, \pi_j} &= t_{i, \pi_j} + \max\{C_{i-1, \pi_j}, C_{i,\pi_{j-1}}\}  && \text{otherwise} \label{MakespanFormula}
\end{align}
\end{small}

Hence, the makespan satisfies $C_{max}(\pi) = C_{M, \pi_N}$ and can be computed in $O(NM)$ time for a given permutation $\pi$. 
For an instance $I$ of the PFSP, we denote the minimum makespan in $I$ by $OPT(I)$, or simply $OPT$ when the instance is clear from the context. 

There also exists a matrix formulation of the PFSP based on the notion of critical paths introduced by Johnson~\cite{JohnsonCriticalPath}. 
While the scheduling-based formulation is more common in operations research, this alternative formulation facilitates the analysis of specific PFSP properties and often enables simpler proofs. This formulation is presented in Section~\ref{section:Preliminaries}. 

The PFSP is known to be NP-Hard for more than two machines~\cite{NPHard}; 
consequently, numerous heuristic solvers have been proposed~\cite{ReviewReciente}. 
To evaluate their performance, two benchmark sets are mainly used: the Taillard instances~\cite{TaillardConjecture} and the VRF instances~\cite{InstancesVRF}. 
The VRF set was proposed since the Taillard instances were reaching exhaustion, and consists of $480$ instances divided into two groups, one small and one large, each with $240$ instances.

Despite advances in hardware and methodology, a significant gap remains between the best-known solutions and the best-known lower bounds for the PFSP. 
Moreover, improvements in upper bounds are usually small. 
This suggests that there is considerable room for improvement in the development of tighter lower bounds. 
The matrix formulation of the PFSP has proven effective for deriving simple and scalable approaches~\cite{MakespanDistribution, EfficientlySolvableCases}. 
Using sets of paths over the matrix, we present a new general scheme to derive lower bounds.
In particular, we propose the bounds $LB_{p,s}$, which outperform the current best-known lower bounds for many benchmark instances. 
Moreover, $LB_{p,s}$ are simpler in structure than the state-of-the-art lower bound~\cite{LowerBoundsNew}. 

In contrast, few results have been reported on upper bounds for the PFSP. 
These are important because under the assumption of integer processing times, it is possible to estimate the number of distinct makespan values an instance may attain. 
This assumption is not restrictive, since PFSP instances require concrete processing times. All the processing times can then be multiplied by an appropriate number to convert them to integers. 
An upper bound that depends on the values $N$, $M$, the shortest processing time, and longest processing time is given. 
Although a previous estimate by Heller~\cite{Heller} already indicated that the number of distinct makespan values is usually much smaller than the number of permutations, our proposed bound improves this estimation in certain cases. 
Furthermore, it enables the derivation of asymptotic results concerning the approximation ratio of any algorithm and provides progress toward Taillard's conjecture, supporting its potential validity.
 
The rest of the paper is organized as follows. 
Section \ref{section:Preliminaries} provides a summary of related work and necessary notation. 
Section \ref{sectioni:NewLB} introduces the proposed lower and upper bounds.  
Section \ref{section:AsymptoticBehavior} presents the asymptotic results. 
Section \ref{section:PerformanceLB} evaluates the performance of $LB_{p,s}$ across the benchmark instances. 
Finally, Section~\ref{section:Conclusion} provides concluding remarks and future research directions. 


\section{Preliminaries and related work}
\label{section:Preliminaries}

In this section, the related work and necessary definitions are presented. 

\subsection{Matrix formulation}

The matrix formulation has proven useful for deriving simple and effective methods, which are often easier to prove than their counterparts in the traditional scheduling version~\cite{MakespanDistribution, EfficientlySolvableCases}. 

\begin{definition}
Given a matrix $A$ with $R$ rows and $C$ columns, a path $P$ that only moves right or down 
is called an \textit{RD-path}. That is, from cell $(i,j)$, the path may proceed to either $(i + 1, j)$ or $(i, j + 1)$, whenever those cells exist. When the start and end positions are not specified, we assume that the path goes from the top-left corner to the bottom-right corner. Let $s(\pi, P)$ denote the sum of the entries in $P$ after permuting the columns of $A$ according to a permutation $\pi$, that is, $s(\pi, P) = \sum_{(i,j) \in P} A_{i,\pi_j}$
\end{definition}

\begin{definition}
A \textit{critical path} in a matrix is an RD-path with maximum sum.
\end{definition}

For a fixed permutation $\pi$, the makespan expression in Equation~\eqref{MakespanFormula} coincides with the sum of a critical path in the matrix induced by $\pi$. 
Thus, the PFSP can be reformulated as the problem of finding a column permutation of a matrix that minimizes the sum of a critical path.

\subsection{Bounds}

One of the first lower bounds proposed for the PFSP is 
based on the observation that each machine must process all jobs, yielding $LB_M = \max_{1 \leq i \leq M} \sum_{j=1}^N t_{i,j}$~\cite{Heller}. 
This was later improved~\cite{MachineLBImproved, TaillardConjecture} by incorporating for each machine $i$, the minimum possible time that this machine must wait before starting and after finishing, resulting in $LB_M^+ = \max_{1 \leq i \leq M} \{ \min_{1 \leq j \leq N} \sum_{i' < i} t_{i', j} + \sum_{1 \leq j \leq N} t_{i,j} + \min_{1 \leq j \leq N} \sum_{i' > i} t_{i', j} \}$. 
Another simple lower bound is derived by considering that each job must be processed on all machines: $LB_J = \max_{1 \leq j \leq N} \sum_{1 \leq i \leq M} t_{i,j}$. 
This was improved in~\cite{JobBaseLB_Improved} by observing that for a specific job, all other jobs must be scheduled either before or after the given job. 
Specifically, for each job $j$ we add the minimum between the processing times of all other jobs on the first and last machine, leading to $LB_J^+ = \max_{1 \leq j \leq N} \{ \sum_{1 \leq i \leq M} t_{i,j} + \sum_{j' \neq j} \min \{t_{1,j'}, t_{M, j'}\}\}$. 
All of these bounds can be computed in $O(NM)$ time. 
The lower bounds $LB_J^+$ and $LB_M^+$ are commonly referred to as job-length and machine-load based lower bounds, respectively.  
Taillard considered $LB = \max \{LB_M^+, LB_J\}$ and conjectured the following

\begin{conjecture}[\cite{TaillardConjecture}]
\label{Conj:Taillard}
For a fixed number of machines $M$ 
$$
\lim_{N \to \infty} \mathbb{P}(LB = OPT) = 1
$$
\end{conjecture}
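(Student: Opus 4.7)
The plan is to study random PFSP instances with processing times drawn i.i.d.\ from Taillard's generator (or, more generally, a bounded integer-valued distribution), and to analyze both $LB$ and $OPT$ in the limit $N \to \infty$ with $M$ fixed. The first step is to reduce $LB$ to the machine-load bound. Since $LB_J = \max_j \sum_{i=1}^M t_{i,j}$ is a maximum over $N$ i.i.d.\ random variables each bounded by $M \cdot t_{\max} = O(1)$, it stays bounded in $N$; in contrast, by the strong law of large numbers, $LB_M^+ / N \to \max_i \mu_i$, where $\mu_i = \mathbb{E}[t_{i,j}]$. Hence with probability tending to $1$, $LB = LB_M^+$ and is attained by a bottleneck machine $i^*$ which (generically) is unique, since the $\mu_i$ are deterministic.

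Next, I would construct an explicit permutation $\pi$ whose makespan matches $LB_M^+$. Let $j_1 \in \arg\min_j \sum_{i < i^*} t_{i,j}$ and $j_N \in \arg\min_j \sum_{i > i^*} t_{i,j}$, and place these at the first and last positions. Arrange the remaining $N-2$ jobs in an order $\pi_2, \ldots, \pi_{N-1}$ chosen so that (a) jobs always arrive at machine $i^*$ before it would otherwise become idle, and (b) downstream machines never delay completion of the last job beyond $C_{i^*, \pi_N} + \sum_{i > i^*} t_{i, \pi_N}$. Since the expected load on every non-bottleneck machine is strictly below that of $i^*$, one expects upstream queues in front of $i^*$ to build, and downstream machines to drain faster than they receive. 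Formally, for each $i \neq i^*$ the partial sums $S_k^{(i)} = \sum_{j \leq k}(t_{i^*,\pi_j} - t_{i,\pi_j})$ form a random walk with positive drift $\mu_{i^*} - \mu_i > 0$, so by a Cram\'er large deviation bound (or a ballot/cycle-lemma estimate) $\min_k S_k^{(i)}$ stays bounded below on a set of probability tending to $1$. A union bound over the $M-1$ non-bottleneck machines, which is fine since $M$ is fixed, then shows that none of them forces idle time on $i^*$.

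Given these non-idleness conditions, the makespan of $\pi$ equals
$$
\sum_{i < i^*} t_{i, \pi_1} \; + \; \sum_{j} t_{i^*, j} \; + \; \sum_{i > i^*} t_{i, \pi_N},
$$
which by the choice of $\pi_1, \pi_N$ is exactly $LB_M^+$. The main obstacle is that the conjecture asks for the exact equality $LB = OPT$, not merely $OPT/LB \to 1$; the latter follows easily from concentration, but pinning the coincidence to the unit requires the constructed schedule to incur \emph{no} idle time whatsoever on $i^*$ and to place both extremal jobs at the endpoints simultaneously. Coupling the endpoint choices with the no-idle condition is the delicate point: removing $j_1$ and $j_N$ from the pool perturbs each random walk $S_k^{(i)}$ by an $O(1)$ amount, and one must argue this does not spoil the positive drift, nor the coincidence of the two extremal minima with the no-idle order. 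Handling this via a conditional large-deviation argument, combined with a decoupling step that controls the joint distribution of the minimal pre- and post-$i^*$ job sums, is where the technical weight of the proof would sit, and is the reason that only partial progress (rather than the full conjecture) is likely to be attainable within the framework developed in this paper.
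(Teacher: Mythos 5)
This statement is a \emph{conjecture}: the paper does not prove it, explicitly states that to its knowledge no progress has been made toward proving or disproving it, and its own contribution (Theorem~\ref{Teo:AsymptoticRatio} and its corollaries) is the much weaker assertion that $\lim_{N\to\infty}\mathbb{P}\bigl(\tfrac{b}{\mu}LB_M^+ \geq OPT\bigr)=1$, i.e.\ that the constant $c$ with $\mathbb{P}(cLB\geq OPT)\to 1$ satisfies $c\leq b/\mu\leq 2$ for uniform times. So there is no paper proof to match your attempt against, and your attempt should be judged on its own; it does not close the conjecture, as you yourself concede in the final paragraph.

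Two concrete problems. First, and most seriously, your random-walk argument rests on a positive drift $\mu_{i^*}-\mu_i>0$ for the bottleneck machine, but in the model actually used by Taillard (and in the paper's $\mathcal{I}(N,M,\mathcal{T})$) \emph{all} processing times are i.i.d.\ with the same distribution, so $\mu_i=\mu$ for every machine and every drift is exactly zero. The machine attaining $LB_M$ is selected by $O(\sqrt{N})$ empirical fluctuations, not by a strictly larger mean; the partial sums $S_k^{(i)}$ are then driftless walks whose running minimum is of order $-\sqrt{N}$, so the Cram\'er/ballot estimate you invoke does not apply and the non-idleness of $i^*$ does not follow. Second, even where a positive drift is available, ``$\min_k S_k^{(i)}$ stays bounded below'' only bounds the total idle time on $i^*$ by an $O_P(1)$ quantity; it does not make it zero. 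That yields $OPT\leq LB_M^+ + O_P(1)$ and hence $OPT/LB\to 1$ in probability, but the conjecture demands the exact equality $OPT=LB$, which requires a permutation with literally no idle time on $i^*$ and simultaneous optimal placement of the two extremal jobs --- precisely the combinatorial existence question that remains open. Your sketch is a reasonable heuristic for why the conjecture is plausible, but as a proof it fails at the drift step and, even repaired, would only reproduce a ratio result comparable in spirit to (though for uniform times stronger than) what the paper already establishes.
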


To the best of our knowledge, no progress has been made toward proving or disproving Conjecture~\ref{Conj:Taillard}. 

A general scheme to derive lower bounds is to relax the problem by allowing machines to process jobs simultaneously~\cite{LowerBoundLADHARI}. The bounds presented in~\cite{LowerBoundLADHARI} were also used to guide the design of the VRF instances, where the authors aimed to maximize the gap between the upper bounds generated by state-of-the-art solvers and the known lower bounds.
A recent study~\cite{LowerBoundsNew} analyzed the dominance relationships among several bounds, including  $LB_M^+$, $LB_J^+$, a mixed-integer linear program (MILP) based bound $L_c$~\cite{MILPBound} and an incremental lower bound $\gamma_{i,k}$~\cite{kumarBound} that computes a lower bound for the time at which machine $i$ finishes processing the $k$-th job.  
It was established that $OPT \ge L_c \geq \gamma_{M,N} \geq L_M^+$ and that, in general, these bounds are incomparable with $LB_J^+$. 
The MILP and incremental bounds were then combined into a new bound $L_c^+$, which dominates all previous ones.  
To the best of our knowledge, the $L_c^+$ bound represents the state of the art for the Taillard instances and the small group of VRF instances, while the large group of VRF instances retains the original lower bounds from when the benchmark was proposed.

In contrast to the lower bounds, far less work has been done on the upper bounds. To our knowledge, the only such result is from Heller~\cite{Heller}. 
The proposed upper bound is obtained by processing jobs one after another, letting each job complete its processing on all machines before starting the next one. This results in a schedule of total length $UB_H=\sum_{1 \leq i \leq M} \sum_{1 \leq j \leq N} t_{i,j}$, which is independent of the permutation of jobs.
Assuming integer processing times, the number of distinct makespans can be bounded by the interval length of any lower and upper bound. 
Heller~\cite{Heller} used this to estimate the number of distinct makespans as $UB_H - LB_M + 1$. 

\subsection{Approximation guarantees}

Since the PFSP is NP-hard~\cite{NPHard}, it is important to understand the worst-case performance of heuristic algorithms. 

\begin{definition}
Given an instance $I$ of a minimization problem and an algorithm $A$, the \textit{approximation ratio} of $A$ on $I$ is defined as $\alpha_A(I) = \frac{ A(I) }{ OPT(I) }$
\end{definition}

\begin{definition}
Given a set of instances $\mathcal{I}$ of a minimization problem and an algorithm $A$, the approximation ratio of $A$ over $\mathcal{I}$ is defined as $\alpha_A( \mathcal{I} ) = \sup \{ \alpha_A(I) \mid I \in \mathcal{I} \}$
\end{definition}

To the best of our knowledge, no algorithm with a constant approximation ratio has been presented for the general PFSP, and all currently known ratios depend on $N$ and $M$.  
It is known that unless $P = NP$, no polynomial-time algorithm can achieve an approximation ratio below $\frac{5}{4}$~\cite{WilliamsonApprox}. 
A first result is that any algorithm achieves an approximation ratio of at most $\min\{N, M\}$~\cite{pinedo-2022}. 
The widely used NEH heuristic~\cite{NEH} has an approximation ratio between $\sqrt{\frac{M}{2}}$ and $\frac{M + 1}{2}$~\cite{NEHApprox}. 
Sviridenko~\cite{SviridenkoApprox} proposed a randomized algorithm based on Chernoff bounds with an approximation ratio of $O(\sqrt{M \log M})$. Another approach~\cite{SqrtMinNM-Approx} achieves an approximation ratio of $O(\sqrt{\min\{N, M\}})$ via a connection to the longest increasing subsequence problem. 
This method was later shown to have time complexity $\Omega(N^4 M)$~\cite{LinearApprox}, and a linear-time alternative with approximation ratio $2\sqrt{2N + M}$ was introduced using weighted monotone subsequences. 

A complementary approach focuses on absolute guarantees, which considers the difference of the solution generated by an algorithm and the optimum rather than the ratio. 
Several geometric methods~\cite{Geo1, Geo2, Geo3} yield algorithms whose makespans differ from the optimum by at most $\mu(M)b$, where $\mu(M)$ is a scalar and $b = \max_{1 \leq i \leq M} \max_{1 \leq j \leq N} t_{i,j}$ is the maximum processing  time. 
It has been shown~\citep{SurveyPerformanceGuarantees} that for fixed $M$, the probability that the approximation ratio of such an algorithm with an absolute approximation guarantee of the form $\mu(M)b$ exceeds one converges to zero as $N \to \infty$. 

For further results on approximation guarantees in PFSP, we refer the reader to~\cite{HeuristicWorstCase, SurveyPerformanceGuarantees}. 

Finally, we recall the weak law of large numbers, which will be used to derive asymptotic results in Section~\ref{section:AsymptoticBehavior}. 

\begin{theorem}[Weak Law of Large Numbers~\cite{ProbabilityBook}]
\label{Teo:WLLN}
Let $X_1, X_2, \dots$ be a sequence of independent and identically distributed random variables, each having finite mean $\mathbb{E}X_i = \mu$. Then, for any $\epsilon > 0$,
\begin{align*}
   \lim_{N \to \infty} \mathbb{P} \left(  \left| \frac{1}{N} \sum_{i = 1}^N X_i - \mu  \right| > \epsilon \right) = 0
\end{align*}
\end{theorem}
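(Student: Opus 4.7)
The plan is to prove the weak law via a truncation argument combined with Chebyshev's inequality, since only the first moment is assumed finite and a direct variance bound is unavailable. By passing to $X_i - \mu$, I may assume $\mu = 0$. For each sample size $N$, introduce the truncated variables $Y_{N,i} = X_i \mathbb{1}_{\{|X_i| \leq N\}}$ and the residuals $Z_{N,i} = X_i - Y_{N,i}$, and decompose the sample mean as
\[
\frac{1}{N}\sum_{i=1}^N X_i = \frac{1}{N}\sum_{i=1}^N \bigl(Y_{N,i} - \mathbb{E}[Y_{N,i}]\bigr) + \mathbb{E}[Y_{N,1}] + \frac{1}{N}\sum_{i=1}^N Z_{N,i}.
\]
Then the goal becomes showing that each of the three summands converges to zero in probability.

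For the bias term, dominated convergence applied to $X_1 \mathbb{1}_{\{|X_1|\leq N\}}$ (dominated by the integrable $|X_1|$) gives $\mathbb{E}[Y_{N,1}] \to 0$. For the residual term, the inclusion $\{Z_{N,i} \neq 0\} = \{|X_i| > N\}$ together with a union bound and the estimate
\[
N\,\mathbb{P}(|X_1| > N) \leq \mathbb{E}\bigl[|X_1|\,\mathbb{1}_{\{|X_1|>N\}}\bigr] \longrightarrow 0
\]
(the last limit again by dominated convergence) shows $\mathbb{P}\bigl(\tfrac{1}{N}\sum_{i=1}^N Z_{N,i} \neq 0\bigr) \to 0$, hence convergence to zero in probability.

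The centered truncated term is where the main work lies. The truncated variables are i.i.d. with finite second moment, and the key estimate is
\[
\mathbb{E}[Y_{N,1}^2] \;=\; \int_0^N 2t\,\mathbb{P}(|X_1| > t)\,dt \;=\; o(N),
\]
which holds because $t\,\mathbb{P}(|X_1| > t) \to 0$ as $t \to \infty$ (a consequence of integrability of $|X_1|$), so Cesàro-type reasoning gives the desired $o(N)$ growth of the integral. Chebyshev's inequality then yields
\[
\mathbb{P}\!\left(\left|\frac{1}{N}\sum_{i=1}^N\bigl(Y_{N,i}-\mathbb{E}Y_{N,i}\bigr)\right| > \epsilon\right) \;\leq\; \frac{\mathrm{Var}(Y_{N,1})}{N\epsilon^2} \;\leq\; \frac{\mathbb{E}[Y_{N,1}^2]}{N\epsilon^2} \;\longrightarrow\; 0.
\]

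Combining the three pieces via a union bound (allocating $\epsilon/3$ to each) completes the proof. The main obstacle is the variance-of-truncation estimate: rewriting $\mathbb{E}[Y_{N,1}^2]$ as a tail integral and showing it is $o(N)$ is the only non-routine step, as it is precisely here that the finiteness of the first moment is leveraged in the absence of any second-moment assumption. Everything else reduces to Chebyshev, linearity of expectation, and dominated convergence.
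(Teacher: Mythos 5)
Your proof is correct, but note that the paper itself offers no proof of this statement: it is imported as a standard textbook result (cited to a probability reference) and used as a black box in Theorem~\ref{Teo:AsymptoticRatio}, so there is no in-paper argument to compare against. What you have written is the classical $L^1$ weak law via truncation: center to $\mu=0$, split the sample mean into the centered truncated part, the bias $\mathbb{E}[Y_{N,1}]$, and the residual part, then handle these by Chebyshev, dominated convergence, and the tail bound $N\,\mathbb{P}(|X_1|>N)\le\mathbb{E}[|X_1|\mathbb{1}_{\{|X_1|>N\}}]\to 0$ respectively. All three steps are sound, and the key estimate $\mathbb{E}[Y_{N,1}^2]=o(N)$ via the Ces\`aro argument on $t\,\mathbb{P}(|X_1|>t)\to 0$ is exactly where the first-moment hypothesis does its work. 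One cosmetic slip: the tail-integral identity should read $\mathbb{E}[Y_{N,1}^2]=\int_0^N 2t\,\mathbb{P}(|Y_{N,1}|>t)\,dt\le\int_0^N 2t\,\mathbb{P}(|X_1|>t)\,dt$, i.e.\ an inequality rather than an equality, since $\mathbb{P}(|Y_{N,1}|>t)=\mathbb{P}(t<|X_1|\le N)$; the inequality is all you need, so nothing breaks. It is also worth observing that in the only place the paper invokes this theorem the variables have bounded support $[a,b]$, so a one-line Chebyshev argument with the actual (finite) variance would suffice there; your more general proof covers the theorem as literally stated, which assumes only a finite mean.
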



\section{Lower and upper bounds}
\label{sectioni:NewLB}

This section begins with a simple lower and upper bound based on critical paths. 
Although this result may be known, we could not identify a specific reference in the literature.  

\begin{proposition}
\label{Prop:NewBounds}
In an instance with $N$ jobs, $M$ machines and processing times satisfying $a \leq t_{i,j} \leq b$, we have 
\begin{align*}
    a(N + M - 1) \leq OPT \leq C_{max}(\pi) \leq b(N + M - 1)
\end{align*}
\end{proposition}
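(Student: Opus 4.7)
The plan is to exploit the matrix formulation: for any fixed permutation $\pi$, the makespan $C_{max}(\pi)$ equals the sum of a critical RD-path in the matrix obtained by permuting the columns of $(t_{i,j})$ according to $\pi$. So the whole proposition reduces to a statement about sums along RD-paths.

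First I would observe the combinatorial fact that every RD-path from the top-left corner $(1,1)$ to the bottom-right corner $(M,N)$ contains exactly $N+M-1$ cells. This is because each step is either right or down, so going from $(1,1)$ to $(M,N)$ requires exactly $M-1$ down moves and $N-1$ right moves, giving $N+M-2$ edges and hence $N+M-1$ visited cells. This is the only nontrivial piece, and it is really just counting — I do not expect any obstacle here.

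Next I would handle the upper bound. Fix any permutation $\pi$, and let $P^*$ be a critical path in the induced matrix, so $C_{max}(\pi) = s(\pi, P^*) = \sum_{(i,j)\in P^*} t_{i,\pi_j}$. Since $P^*$ has $N+M-1$ cells and each processing time is at most $b$, we get $C_{max}(\pi) \leq b(N+M-1)$. The inequality $OPT \leq C_{max}(\pi)$ is just the definition of $OPT$ as the minimum over permutations, so it requires no argument.

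For the lower bound, I would pick an optimal permutation $\pi^*$ so that $OPT = C_{max}(\pi^*) = s(\pi^*, P^*)$ where $P^*$ is a critical path for $\pi^*$. Again $P^*$ has $N+M-1$ cells, and each entry is at least $a$, so $OPT \geq a(N+M-1)$. Chaining the three inequalities gives the full statement. There is no real obstacle; the only conceptual point is the length of an RD-path, and the rest is immediate from $a \leq t_{i,j} \leq b$ together with the characterization of the makespan as a critical-path sum.
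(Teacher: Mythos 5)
Your proposal is correct and follows essentially the same argument as the paper: both rely on the fact that every RD-path (hence every critical path) contains exactly $N+M-1$ cells, each bounded between $a$ and $b$. Your version merely spells out the counting argument and the choice of permutation in more detail than the paper's brief proof.
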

\begin{proof}
Any critical path must pass through exactly $N + M - 1$ cells. Since each entry is between $a$ and $b$, the total sum of the path is at least $a(N + M - 1)$ and at most $b(N + M - 1)$. 
\end{proof}

This result allows us to estimate the number of distinct makespan values. 
Moreover, the upper bound $b(N + M - 1)$ will be useful in the asymptotic analysis in Section~\ref{section:AsymptoticBehavior}, which has implications for Taillard's conjecture. 
We refer to this upper bound as $UB$.

\begin{corollary}
\label{Coro:NumberMakespans}
In an instance with $N$ jobs, $M$ machines and integer processing times satisfying $a \leq t_{i,j} \leq b$, the number of distinct makespan values is at most $(N + M - 1)(b - a) + 1$. 
\end{corollary}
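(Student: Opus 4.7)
The plan is to simply combine Proposition~\ref{Prop:NewBounds} with the integrality of the processing times. First I would observe that, since every $t_{i,j}$ is an integer, the sum $s(\pi, P)$ along any RD-path is an integer, and hence the makespan $C_{max}(\pi)$, being the sum along a critical path, is an integer for every permutation $\pi$.

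Next, I would apply Proposition~\ref{Prop:NewBounds} to confine this integer in an explicit interval: for every permutation $\pi$,
\begin{align*}
a(N+M-1) \;\leq\; C_{max}(\pi) \;\leq\; b(N+M-1).
\end{align*}
Thus the set of attainable makespan values is contained in the set of integers lying in the closed interval $[a(N+M-1),\, b(N+M-1)]$.

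Finally, I would count: the number of integers in $[a(N+M-1),\, b(N+M-1)]$ equals $b(N+M-1) - a(N+M-1) + 1 = (N+M-1)(b-a)+1$, which gives the claimed upper bound on the number of distinct makespan values.

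There is no real obstacle here, since the corollary is a direct consequence of Proposition~\ref{Prop:NewBounds} together with the observation that makespan values lie in $\mathbb{Z}$ when all $t_{i,j}\in\mathbb{Z}$; the only step that requires a word of justification is the integrality of $C_{max}(\pi)$, which follows immediately from the matrix/critical-path reformulation recalled in Section~\ref{section:Preliminaries}.
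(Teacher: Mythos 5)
Your proposal is correct and matches the paper's (implicit) argument exactly: the corollary is stated without a separate proof precisely because it follows from Proposition~\ref{Prop:NewBounds} by noting that the makespan is an integer confined to the interval $[a(N+M-1),\, b(N+M-1)]$, which contains $(N+M-1)(b-a)+1$ integers. Your added remark justifying the integrality of $C_{max}(\pi)$ via the critical-path sum is a harmless and welcome bit of extra care.
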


The estimate given by Corollary~\ref{Coro:NumberMakespans} can be compared with Heller's earlier bound~\cite{Heller}, which states that the number of makespans is at most $UB_H - LB_M + 1$. Since $LB_M \leq LB_M^+$, we refine this comparison by using $UB_H - LB_M^+ + 1$. 
Recall that $UB_H$ corresponds to the sum of all $NM$ processing times, while $LB_M^+$ is the sum of only $N + M - 1$ terms. This leads to the inequality: $(NM - (N +  M  - 1))a + 1 \leq UB_H - LB_M^+ + 1 \leq (NM - (N +  M  - 1))b + 1$. 
Some instances where Corollary~\ref{Coro:NumberMakespans} yields a better estimate than Heller's refined bound are captured in the next proposition. 

\begin{proposition}
Let the processing times be integers in the range $[a, b]$ with $1 \leq a < b$. For a fixed number of machines $M > \frac{b}{a}$, there is a finite number of instances where the new estimation can be worse, that is, where $(N + M - 1)(b - a) + 1 > UB_H - LB_M^+ + 1$
\end{proposition}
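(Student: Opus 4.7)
The plan is to leverage the chain of inequalities stated immediately before the proposition, namely $UB_H - LB_M^+ + 1 \geq (NM - (N+M-1))a + 1 = (N-1)(M-1)a + 1$. If I can show that $(N+M-1)(b-a) \leq (N-1)(M-1)a$ holds for all sufficiently large $N$, then the new estimate is automatically at least as tight as Heller's refined estimate for those $N$, and only the finitely many remaining instances need to be considered.

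The key algebraic step is to rewrite the desired inequality in the form
$$\frac{b-a}{(M-1)a} \;\leq\; \frac{N-1}{N+M-1}.$$
Set $r = \dfrac{b-a}{(M-1)a}$. The hypothesis $M > b/a$ is equivalent to $Ma > b$, which rearranges to $(M-1)a > b-a$, so $r < 1$. Since the right-hand side tends to $1$ as $N \to \infty$, it eventually exceeds $r$; solving explicitly gives the threshold
$$N \;\geq\; \frac{1 + r(M-1)}{1 - r} \;=:\; N_0(M,a,b).$$
For every $N > N_0$, the new estimate is already no worse than Heller's refined estimate.

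Consequently, any instance on which $(N+M-1)(b-a) + 1 > UB_H - LB_M^+ + 1$ must have $N \leq N_0$. With $M$ fixed, $N$ bounded by $N_0$, and each processing time a single integer in $\{a, a+1, \dots, b\}$, the number of such instances is at most $\sum_{N=1}^{N_0} (b-a+1)^{NM}$, which is finite.

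The only real obstacle is identifying the precise role of the hypothesis $M > b/a$: it is exactly what forces $r < 1$, which in turn is what allows the asymptotic comparison to go in the favorable direction. If $M \leq b/a$, the constant $r$ would be at least $1$ and the inequality could fail for arbitrarily large $N$, so the bound on $M$ is tight in this argument. The remaining work is routine bookkeeping of the threshold.
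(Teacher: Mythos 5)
Your proof is correct and follows essentially the same route as the paper: both establish that $(N+M-1)(b-a) \leq (NM-(N+M-1))a$ for all $N$ beyond an explicit threshold, using $M > b/a$ to make the comparison go the right way; indeed your threshold $\frac{1+r(M-1)}{1-r}$ simplifies to exactly the paper's $\frac{(M-1)b}{Ma-b}$. Your explicit counting of the finitely many remaining instances is a small addition the paper leaves implicit, but the core argument is identical.
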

\begin{proof}
From the assumption $M > \frac{b}{a}$, it follows that $b - Ma < 0$. Let $N > \frac{-(M - 1)b}{b - Ma}$ be any number of jobs, then
\begin{align*}
N(b - Ma) &< -(M - 1)b\\
Nb + (M - 1)b &< NMa\\
(N + M  - 1)(b - a) &< (NM - (N + M - 1))a
\end{align*}
Since $(NM - (N + M - 1))a \leq UB_H - LB_M^+ $, the conclusion follows.  
\end{proof}

We now introduce a general framework for deriving lower bounds based on RD-paths and analyze how it relates to existing bounds. 

Let $\mathcal{P}$ be the set of all RD-paths of a matrix with $M$ rows and $N$ columns, and let $\mathcal{U} \subseteq \mathcal{P}$ be any non-empty subset. 
Using the matrix formulation of the PFSP and the fact that the maximum over a set is greater than or equal to the maximum over any non-empty subset, we observe (the minimization over $\pi$ range over all permutations of the jobs)
\begin{align}
    OPT &= \min_{\pi} \max_{P \in \mathcal{P}} s(\pi, P)  && \label{Equality:Makespan}\\
    &\geq \min_{\pi} \max_{P \in \mathcal{U}} s(\pi, P) && \label{Inequality:MinMax}\\
    &\geq \max_{P \in \mathcal{U}} \min_{\pi} s(\pi, P) && \label{Inequality:MaxMin}
\end{align}

For a single RD-path, the minimization over all permutations can be computed easily with a max-flow-min-cost network. 
However, for certain path structures, more efficient algorithms can be used,  
which is particularly relevant for inequality~(\ref{Inequality:MaxMin}), where the minimizations are done independently for each path. 
As an example, let $H_i$ denote the path that moves from $(1,1)$ down to $(i,1)$, then right to $(i,N)$, and then down to $(M,N)$, see Figure~\ref{fig:PathVisualization}. 
For any permutation $\pi$, the sum along $H_i$ is $s(\pi, H_i) = \sum_{i' < i} t_{i', \pi_1} + \sum_{j=1}^N t_{i,j} + \sum_{i' > i} t_{i', \pi_N}$.  
If we take $\mathcal{U} = \{H_i \mid 1 \leq i \leq M\}$, then inequality~\eqref{Inequality:MaxMin} leads to a bound similar to $LB_M^+$. Indeed, since the first and last jobs cannot be the same (when $N \geq 2$), we obtain a slight improvement
\begin{align*}
    LB_M^{++} = \max_{1 \leq i \leq M} \left\{ \sum_{j=1}^N t_{i,j} + \min_{a \neq b} \left\{ \sum_{i' < i} t_{i', a} + \sum_{i' > i} t_{i', b}  \right\} \right\}
\end{align*}

Similarly, the job-based bound $LB_J^+$ can be recovered with this method. 
This time, the set of paths $\mathcal{U}$ will be dynamic depending on the permutation $\pi$.  
Define $V_j$ to be the path that moves right from $(1,1)$ until reaching column $k$ corresponding to job $j$ (i.e., $\pi_k = j$), then down to row $M$, and finally right to $(M, N)$ (see Figure~\ref{fig:PathVisualization}). Let $\mathcal{U} = \{V_j \mid 1 \leq j \leq N\}$. Then inequality~\eqref{Inequality:MaxMin} yields $LB_J^+$.

\begin{figure}
    \centering
    \includegraphics[width=0.4\textwidth]{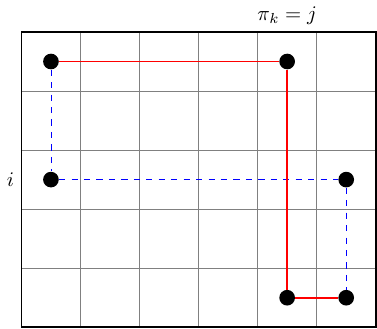}
  \caption{Visualization of path $H_i$ (dashed) and $V_j$ (solid)}
  \label{fig:PathVisualization}
  \end{figure}

We have shown that the popular bounds based on machines and jobs can be derived from inequality~(\ref{Inequality:MaxMin}), which is dominated by inequality~(\ref{Inequality:MinMax}) and may lead to some improvements. 
This possible improvement comes from the fact that the permutation that minimizes the sum of a path $P \in \mathcal{U}$ may increase the sum of another different path, and we should instead minimize simultaneously.

We now introduce a family of bounds, denoted $LB_{p,s}$, that are theoretically supported by solving inequality~(\ref{Inequality:MinMax}) for a given set of RD-paths $\mathcal{U}_{p,s}$. The detailed performance of $LB_{p,s}$ with the set of instances is given in Section~\ref{section:PerformanceLB}.
The key idea is to set a prefix of $p$ columns and a suffix of $s$ columns and define $\mathcal{U}_{p,s}$ to include all RD-paths that start at $(1,1)$ within the first $p$ columns, moves arbitrarily, then moves to the right until reaching column $N-s+1$, and in the last $s$ columns moves arbitrarily again to $(M, N)$, see Figure~\ref{fig:Ups}. 

\begin{figure}
    \centering
    \includegraphics[width=0.4\textwidth]{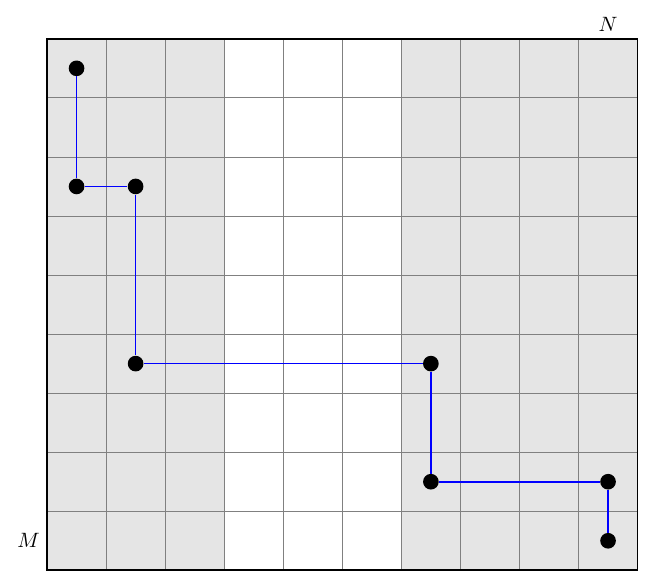}
  \caption{Visualization of path included in $U_{3, 4}$ for a matrix with $M = 9$ rows and $N = 10$ columns. The path is also included in $U_{2, 4}$}
  \label{fig:Ups}
\end{figure}

\begin{definition}
Let $p \geq 1$ and $s \geq 1$ be integers with $p + s \leq N$. We define the set $\mathcal{U}_{p,s}$ to contain all the RD-paths that move from $(1,1)$ to some $(i,p)$ via any RD-path, then move right to $(i, N-s+1)$, and finally follow any RD-path to $(M,N)$. We refer to such paths as $i$-type paths. 
\end{definition}

\begin{definition}
The \textit{prefix-suffix bound} $LB_{p,s}$ is defined as the solution to inequality~(\ref{Inequality:MinMax}) for the path set $\mathcal{U}_{p,s}$. 
\end{definition}

\begin{proposition}
\label{prop:CardinalityU}
The number of paths in $\mathcal{U}_{p,s}$ is $\binom{M + p + s - 2}{p + s - 1}$
\end{proposition}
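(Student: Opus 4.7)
The plan is to count the paths in $\mathcal{U}_{p,s}$ by stratifying according to the row index $i$ that defines the $i$-type path, since the middle segment from $(i,p)$ to $(i,N-s+1)$ is completely forced once $i$ is fixed. For a given $i$, the first segment goes from $(1,1)$ to $(i,p)$ using exactly $i-1$ down moves and $p-1$ right moves, so there are $\binom{i+p-2}{p-1}$ such RD-paths. Symmetrically, the last segment from $(i,N-s+1)$ to $(M,N)$ uses $M-i$ downs and $s-1$ rights, giving $\binom{M-i+s-1}{s-1}$ paths. Multiplying and summing over $i$ yields the total count.

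The second step is to evaluate the resulting sum. Setting $k=i-1$, the goal reduces to showing
\begin{align*}
\sum_{k=0}^{M-1} \binom{k+p-1}{p-1}\binom{M-1-k+s-1}{s-1} \;=\; \binom{M+p+s-2}{p+s-1},
\end{align*}
which is a standard instance of the Vandermonde--Chu convolution $\sum_{k=0}^{n}\binom{k+a}{a}\binom{n-k+b}{b}=\binom{n+a+b+1}{a+b+1}$ applied with $n=M-1$, $a=p-1$, $b=s-1$. This identity finishes the proof at once.

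A cleaner alternative I would mention is a bijective argument. Observe that any RD-path from $(1,1)$ to $(M,p+s)$ in an $M\times(p+s)$ grid crosses the boundary between columns $p$ and $p+1$ at a unique row $i$, since once a path enters a column it never returns to an earlier one. Splitting at this crossing gives an $i$-type decomposition: the prefix from $(1,1)$ to $(i,p)$ is reused directly, and the suffix from $(i,p+1)$ to $(M,p+s)$ is shifted horizontally by $N-s-p$ to become the suffix of the original path from $(i,N-s+1)$ to $(M,N)$. The forced horizontal stretch from $(i,p)$ to $(i,N-s+1)$ in the original path corresponds under the bijection to the single right step from $(i,p)$ to $(i,p+1)$. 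This gives a bijection between $\mathcal{U}_{p,s}$ and all RD-paths in an $M\times(p+s)$ grid, and the latter count is $\binom{M+p+s-2}{p+s-1}$ by a single binomial evaluation.

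I do not expect a genuine obstacle here: the only subtlety is recognizing the correct Vandermonde--Chu instance (or, equivalently, articulating why the crossing row in the bijective argument is well-defined and unique). Both steps are essentially mechanical once the $i$-type decomposition is in place.
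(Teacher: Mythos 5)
Your proposal is correct. Your primary argument---stratifying by the row $i$ of the forced horizontal stretch, counting $\binom{i+p-2}{p-1}$ prefixes and $\binom{M-i+s-1}{s-1}$ suffixes, and collapsing the sum via the Vandermonde--Chu convolution---is a genuinely different route from the paper's. The paper's proof is exactly your ``cleaner alternative'': it observes that deleting the forced middle segment identifies $\mathcal{U}_{p,s}$ with the set of all RD-paths in an $M \times (p+s)$ grid, whose count is $\binom{M+p+s-2}{p+s-1}$ by a single binomial evaluation, and it states this in two sentences without writing down any sum. What your convolution route buys is a fully explicit, term-by-term verification that makes the dependence on $i$ visible (which could be useful if one later wanted the number of $i$-type paths for each fixed $i$); what the bijective route buys is brevity and the conceptual point that the cardinality depends only on $p+s$, which the paper immediately exploits in the discussion following the proposition. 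The one detail worth keeping from your write-up, which the paper leaves implicit, is that the crossing row $i$ is uniquely determined by the path, so the $i$-type decomposition is indeed a partition and no path is double-counted.
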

\begin{proof}
By the definition of $\mathcal{U}_{p,s}$, any $i$-type path moves to the right from $(i,p)$ to $(i, N - s + 1)$. Thus, the cardinality of $\mathcal{U}_{p,s}$ coincides with the number of RD-paths in a matrix of $M$ rows and $p + s$ columns.
\end{proof}

Proposition~\ref{prop:CardinalityU} states that the cardinality of $\mathcal{U}_{p, s}$ depends only on the sum $p + s$ rather than on the specific values of $p$ and $s$. One may expect that by increasing the number of paths considered, the solution to expression~(\ref{Inequality:MinMax}) will yield a better lower bound. 
The following proposition formalizes this idea and states two good characteristics of a lower bound. First, that it is possible to obtain the optimum, and second, that if we fix $p$, then the lower bound $LB_{p, s}$ is a non-decreasing function of $s$. 
The same is true for a fixed $s$ and variable $p$. 
However, the behavior of $LB_{p, s}$ for fixed $p + s$ and different values of $p$ and $s$ is not clear. 

\begin{proposition}
\label{prop:BoundBehaviour}
The prefix-suffix bound satisfy 
\begin{enumerate}[i)]
    \item $LB_{p, N - p} = OPT$ for any $1 \leq p < N$
\item For $1 \leq p \leq P \leq N$ and $1 \leq s \leq S \leq N$ such that $P + S \leq N$, we have that $LB_{p,s} \leq LB_{P, S}$
\end{enumerate}
\end{proposition}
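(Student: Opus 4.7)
The plan is to derive both parts from elementary monotonicity properties together with a careful inspection of the set $\mathcal{U}_{p,s}$. Part (i) should fall out of the observation that $\mathcal{U}_{p,N-p}$ is \emph{all} RD-paths, and part (ii) should follow once one establishes the inclusion $\mathcal{U}_{p,s}\subseteq \mathcal{U}_{P,S}$.

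For (i), I would unfold the definition with $s=N-p$. The forced rightward portion of an $i$-type path then runs from column $p$ to column $N-s+1=p+1$, i.e.\ it is a single right step, which is itself a legal move of any RD-path. Hence any RD-path from $(1,1)$ to $(M,N)$ can be written as: an RD-path from $(1,1)$ to $(i,p)$, where $i$ is its row upon entering column $p+1$, followed by the right step to $(i,p+1)$, followed by an RD-path from $(i,p+1)$ to $(M,N)$. So $\mathcal{U}_{p,N-p}=\mathcal{P}$, and inequality~(\ref{Inequality:MinMax}) becomes equality~(\ref{Equality:Makespan}). This gives $LB_{p,N-p}=OPT$.

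For (ii), I would argue that $\mathcal{U}_{p,s}\subseteq \mathcal{U}_{P,S}$ whenever $p\le P$, $s\le S$ and $P+S\le N$. The constraint $P+S\le N$ gives $P\le N-S\le N-s$, so both $P$ and $N-S+1$ lie in the interval $[p,N-s+1]$ of columns on which an $i$-type path from $\mathcal{U}_{p,s}$ is horizontal (at row $i$). Given such a path, I split its prefix $(1,1)\rightsquigarrow(i,p)\to\cdots\to(i,P)$ as an RD-path from $(1,1)$ to $(i,P)$; the middle portion $(i,P)\to\cdots\to(i,N-S+1)$ is purely rightward; and the suffix $(i,N-S+1)\to\cdots\to(i,N-s+1)\rightsquigarrow(M,N)$ is an RD-path from $(i,N-S+1)$ to $(M,N)$. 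This exhibits the path as an $i$-type element of $\mathcal{U}_{P,S}$. Once the inclusion is in place, for every permutation $\pi$
\begin{align*}
\max_{P'\in\mathcal{U}_{p,s}} s(\pi,P') \;\le\; \max_{P'\in\mathcal{U}_{P,S}} s(\pi,P'),
\end{align*}
and minimizing both sides over $\pi$ preserves the inequality, giving $LB_{p,s}\le LB_{P,S}$.

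The only genuinely nontrivial step is the verification of the inclusion in (ii), and even there the work is just bookkeeping: one must confirm that $P+S\le N$ is exactly what forces $P$ and $N-S+1$ to sit inside the horizontal stretch of every $i$-type path of $\mathcal{U}_{p,s}$. If that inequality fails, the inclusion can fail as well, so the hypothesis $P+S\le N$ really is used. No new tools beyond the definitions and monotonicity of $\min$/$\max$ are required.
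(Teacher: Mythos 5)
Your proof is correct and follows essentially the same route as the paper: part (i) via the identity $\mathcal{U}_{p,N-p}=\mathcal{P}$, and part (ii) via the inclusion $\mathcal{U}_{p,s}\subseteq\mathcal{U}_{P,S}$ combined with monotonicity of the min--max. The only difference is that the paper simply asserts the set identity and the inclusion ``by definition,'' whereas you verify both explicitly (in particular, that $P+S\le N$ places $P$ and $N-S+1$ inside the horizontal stretch), which is a welcome addition rather than a deviation.
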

\begin{proof}
The set $\mathcal{U}_{p, N-p}$ is the same as the set of all RD-paths, and therefore $LB_{p, N - p} = OPT$. 
By definition, $\mathcal{U}_{p, s} \subseteq \mathcal{U}_{P, S}$, and therefore $LB_{p, s} \leq LB_{P, S}$. 
\end{proof}

\begin{proposition}
\label{prop:Complexity}
The prefix-suffix bound $LB_{p,s}$ can be computed in $O(N^{p + s}M(p + s))$ time
\end{proposition}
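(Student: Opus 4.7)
The plan is to solve the inner maximization in expression~(\ref{Inequality:MinMax}) analytically by decomposing any $i$-type path into three independent pieces, and then to enumerate only those parts of $\pi$ that actually influence the decomposed value.

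For any fixed permutation $\pi$ and any $i$-type path $P \in \mathcal{U}_{p,s}$, the cells of $P$ partition canonically into a prefix RD-path from $(1,1)$ to $(i,p)$, a horizontal stretch on row $i$ through columns $p+1,\dots,N-s$, and a suffix RD-path from $(i,N-s+1)$ to $(M,N)$. Accordingly
\[
 \max_{P \in \mathcal{U}_{p,s}} s(\pi,P) \;=\; \max_{1\le i\le M}\bigl[\, f_i(\pi) + g_i(\pi) + h_i(\pi)\,\bigr],
\]
where $f_i(\pi)$ (resp.\ $h_i(\pi)$) is the max RD-path sum in the prefix $M\!\times\!p$ (resp.\ suffix $M\!\times\!s$) submatrix, ending at $(i,p)$ (resp.\ starting at $(i,N-s+1)$), and $g_i(\pi)=\sum_{j=p+1}^{N-s} t_{i,\pi_j}$.

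The crucial observation is that $f_i(\pi)$ depends on $\pi$ only through the ordered prefix $(\pi_1,\dots,\pi_p)$, $h_i(\pi)$ only through the ordered suffix $(\pi_{N-s+1},\dots,\pi_N)$, and $g_i(\pi)$ only through the unordered set of jobs left for the middle block, which is the complement of the prefix and suffix supports. Hence the outer minimization over all $\pi$ reduces to enumerating pairs of disjoint ordered tuples of lengths $p$ and $s$ drawn from the $N$ jobs; their number is $N!/(N-p-s)! = O(N^{p+s})$.

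For each enumerated configuration I would compute $f_i$ and $h_i$ for every row $i$ via the standard max-sum RD-path DP on the two small submatrices, at cost $O(Mp)$ and $O(Ms)$ respectively. After a one-time $O(NM)$ preprocessing of row totals $R_i=\sum_{j=1}^N t_{i,j}$, the middle sums follow from $g_i(\pi)=R_i - \sum_{j \in S_A} t_{i,j}-\sum_{j \in S_C} t_{i,j}$ (where $S_A,S_C$ denote the prefix and suffix job sets) in $O(p+s)$ per row and $O(M(p+s))$ overall, after which the outer $\max_i$ costs $O(M)$. Processing the $O(N^{p+s})$ configurations therefore takes $O(N^{p+s} M(p+s))$ time in total, yielding the stated bound. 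The only point that needs care is verifying the clean three-way partition of an $i$-type path (boundary cells $(i,p)$ and $(i,N-s+1)$ must be assigned to exactly one segment each); once this is checked the DP correctness and the complexity count are routine.
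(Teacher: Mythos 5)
Your proposal is correct and follows essentially the same approach as the paper: enumerate the $O(N^{p+s})$ ordered prefix/suffix job tuples, and for each one use dynamic programming on the two small submatrices plus precomputed row totals to evaluate all $i$-type paths in $O(M(p+s))$ time. Your write-up is just a more explicit version of the paper's argument, spelling out the three-way decomposition and the observation that only the prefix and suffix orderings affect the value.
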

\begin{proof}
We enumerate all combinations of the first $p$ and last $s$ jobs in $O(N^{p+s})$. For each $i$-type path, the sum includes: (1) the maximum value of a path from $(1,1)$ to $(i,p)$, (2) the sum of processing times of unselected jobs in row $i$, and (3) the maximum path sum from $(i, N-s+1)$ to $(M,N)$. All of this can be computed for all $i$ using dynamic programming in $O(M(p + s))$ time. 
\end{proof}


\section{Asymptotic behavior}
\label{section:AsymptoticBehavior}

\begin{definition}
For $N, M \in \mathbb{N}$ and a random variable $\mathcal{T}$, let $\mathcal{I}(N, M, \mathcal{T})$ be the random instance with $N$ jobs and $M$ machines, 
where the processing times $T_{i,j}$ are independent and identically distributed random variables with the same distribution as $\mathcal{T}$.
\end{definition}

The following theorem analyzes the asymptotic behavior of the ratios $\frac{UB}{LB_M^+}$ and $\frac{UB}{LB_J^+}$ when either the number of jobs or the number of machines is fixed while the other increases. 
The result is used in the subsequent corollaries to advance both Taillard’s conjecture and the understanding of the asymptotic approximation ratio of any algorithm.
This approach is general and can be applied with any pair of lower and upper bounds, which may lead to better asymptotic ratios, especially in restricted instances where tighter bounds are available. 

\begin{theorem}
\label{Teo:AsymptoticRatio}
Let $N_0, M_0 \in \mathbb{N}$ and let $\mathcal{T}$ be a random variable with support in $[a,b]$ and mean $\mu$. Then 
\begin{align}
\lim_{N \to \infty} \mathbb{P} \left( \frac{UB}{LB_M^+}( \mathcal{I}(N, M_0, \mathcal{T}) ) \leq \frac{b}{\mu} \right) &= 1\\
\lim_{M \to \infty} \mathbb{P} \left( \frac{UB}{LB_J^+}( \mathcal{I}(N_0, M, \mathcal{T}) ) \leq \frac{b}{\mu} \right) &= 1
\end{align}
\end{theorem}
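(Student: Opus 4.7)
The plan is to combine the deterministic upper bound $UB = b(N + M - 1)$ from Proposition~\ref{Prop:NewBounds} with a one-machine (respectively one-job) lower bound on $LB_M^+$ (respectively $LB_J^+$), to which the Weak Law of Large Numbers (Theorem~\ref{Teo:WLLN}) applies directly. The key observation is that both $LB_M^+$ and $LB_J^+$ are defined as maxima, so restricting the maximum to a single carefully chosen index yields a clean scalar lower bound that is a sum of i.i.d.\ copies of $\mathcal{T}$.

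For the first limit, the plan is to specialize $LB_M^+$ to $i=1$: the "wait-before" term is an empty sum and the "wait-after" term $\min_j \sum_{i'>1} T_{i',j}$ is non-negative, so
\[
LB_M^+ \;\geq\; \sum_{j=1}^{N} T_{1,j}.
\]
Since $T_{1,1},\dots,T_{1,N}$ are i.i.d.\ with mean $\mu$, Theorem~\ref{Teo:WLLN} gives, for any fixed $\epsilon>0$, that $\mathbb{P}\bigl(\sum_j T_{1,j} \geq N(\mu-\epsilon)\bigr) \to 1$ as $N\to\infty$. On this high-probability event,
\[
\frac{UB}{LB_M^+} \;\leq\; \frac{b(N + M_0 - 1)}{N(\mu-\epsilon)} \;\xrightarrow[N\to\infty]{}\; \frac{b}{\mu-\epsilon},
\]
which can be made arbitrarily close to $b/\mu$ by choosing $\epsilon$ small. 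Symmetrically, for the second limit the plan is to specialize $LB_J^+$ to $j=1$, drop the non-negative correction $\sum_{j'\neq 1}\min\{T_{1,j'},T_{M,j'}\}$, and obtain $LB_J^+ \geq \sum_{i=1}^M T_{i,1}$. Applying WLLN in $M$ with $N_0$ fixed and repeating the same chain of inequalities gives $\frac{UB}{LB_J^+} \leq \frac{b(N_0+M-1)}{M(\mu-\epsilon)} \to \frac{b}{\mu-\epsilon}$.

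The only point requiring care — and arguably the only real subtlety — is the interpretation of the inequality $\leq b/\mu$ in the statement: WLLN supplies only $S \geq N(\mu-\epsilon)$ with high probability, so the conclusion must be read as "for every $\epsilon>0$, eventually the ratio is at most $b/(\mu-\epsilon)$ with probability tending to $1$", and then $\epsilon\downarrow 0$. Aside from this $\epsilon$-bookkeeping, the proof is a two-step chain: identify the single-index lower bound on $LB_M^+$ or $LB_J^+$, apply WLLN to the corresponding row or column, and divide into the deterministic $UB$. No richer structural property of the flowshop or of the improved bounds $LB_M^+, LB_J^+$ beyond this trivial one-index specialization is needed.
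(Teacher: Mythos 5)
Your proposal is correct and follows essentially the same route as the paper: bound $LB_M^+$ from below by the first-row sum $\sum_{j} T_{1,j}$ (respectively $LB_J^+$ by the first-column sum $\sum_i T_{i,1}$), divide this into the deterministic $UB = b(N+M-1)$, and apply the weak law of large numbers. The $\epsilon$-bookkeeping you single out as the only subtle point is exactly what the paper relegates to its Appendix~\ref{appendix:LimitAnalysis}, where it argues that the bounding expression $\frac{(N+M_0-1)b}{N}\cdot\frac{1}{\frac{1}{N}\sum_j T_{1,j}}$ converges in probability to $b/\mu$.
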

\begin{proof}
Note that $\sum_{j=1}^N T_{1,j} \leq LB_M^+$, hence:
\begin{align*}
    \frac{UB}{LB_M^+}( \mathcal{I}(N, M_0, \mathcal{T}) )  &\leq \frac{ (N + M_0 - 1)b }{ \sum_{j=1}^N T_{1,j} }\\
    &= \frac{ (N + M_0 - 1)b }{ N } \frac{ 1 }{ \frac{1}{N} \sum_{j=1}^N T_{1,j} }
\end{align*}
Clearly, $\lim_{N \to \infty} \frac{(N + M_0 - 1)b}{N} = b$ and by the weak law of large numbers (Theorem~\ref{Teo:WLLN}), we know that for any $\epsilon > 0$,
$\lim_{N \to \infty} \mathbb{P}(| \frac{1}{N} \sum_{j=1}^N T_{1,j} - \mu| > \epsilon) = 0$ and result~(\ref{AlgApprox:Ninf}) follows. A detailed derivation is given in Appendix~\ref{appendix:LimitAnalysis}. 

The case where $M$ tends to infinity is analogous, using the lower bound $\sum_{i=1}^M T_{i, 1} \leq LB_J^+$ with the following expression
\begin{align*}
   \frac{UB}{LB_J^+}( \mathcal{I}(N_0, M, \mathcal{T}) ) &\leq \frac{ (N_0 + M - 1)b }{ M } \frac{ 1 }{ \frac{1}{M} \sum_{i=1}^M T_{i, 1}  }
\end{align*}
\end{proof}

Taillard's conjecture can be approached by determining the smallest value $c$ such that $\lim_{N \to \infty} \mathbb{P}(cLB \geq OPT) = 1$. 
The conjecture claims that $c = 1$. 
Since $OPT \leq UB$, Theorem~\ref{Teo:AsymptoticRatio} implies the next corollary, 
establishing that $c \leq \frac{b}{\mu}$. 
In particular, for uniformly distributed processing times, we obtain $c \leq \frac{b}{\frac{1}{2}(a + b)} \leq 2$, which is especially relevant, as many commonly used benchmark instances are generated using the uniform distribution. 

\begin{corollary}
Let $N_0, M_0 \in \mathbb{N}$ and let $\mathcal{T}$ be a random variable with support in $[a,b]$ and mean $\mu$. Then
\begin{align*}
\lim_{N \to \infty} \mathbb{P}\left(\frac{b}{\mu} LB_M^+ (\mathcal{I}(N, M_0, \mathcal{T})) \geq OPT \right) &= 1\\
\lim_{M \to \infty} \mathbb{P}\left(\frac{b}{\mu} LB_J^+ (\mathcal{I}(N_0, M, \mathcal{T})) \geq OPT \right) &= 1\\
\end{align*}
\end{corollary}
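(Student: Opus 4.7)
The plan is to derive the corollary directly from Theorem~\ref{Teo:AsymptoticRatio} by chaining the deterministic inequality $OPT \leq UB$ from Proposition~\ref{Prop:NewBounds} with the asymptotic bounds already established on the ratios $UB/LB_M^+$ and $UB/LB_J^+$. Since $LB_M^+$ is nonnegative (in fact strictly positive once $a > 0$, and at any rate well-defined on every realization), dividing through by it preserves the inequality sample-path-wise: for every instance $\omega$, one has
\begin{equation*}
\frac{OPT(\omega)}{LB_M^+(\omega)} \leq \frac{UB(\omega)}{LB_M^+(\omega)}.
\end{equation*}
Hence the event $\{UB/LB_M^+ \leq b/\mu\}$ is a subset of the event $\{(b/\mu)\,LB_M^+ \geq OPT\}$.

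From here I would invoke monotonicity of probability: for each $N$,
\begin{equation*}
\mathbb{P}\bigl( (b/\mu)\,LB_M^+(\mathcal{I}(N,M_0,\mathcal{T})) \geq OPT \bigr) \;\geq\; \mathbb{P}\bigl( UB/LB_M^+(\mathcal{I}(N,M_0,\mathcal{T})) \leq b/\mu \bigr),
\end{equation*}
and by Theorem~\ref{Teo:AsymptoticRatio} the right-hand side tends to $1$ as $N \to \infty$. Since probabilities are bounded above by $1$, the squeeze forces the left-hand side to converge to $1$ as well, which is exactly the first claim of the corollary.

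The second claim follows by the same two-line argument, now using the companion estimate in Theorem~\ref{Teo:AsymptoticRatio} for $M \to \infty$ with $N_0$ fixed, and the corresponding inequality $OPT \leq UB$ applied to $\mathcal{I}(N_0, M, \mathcal{T})$. There is no substantive obstacle here: the only mild subtlety is ensuring that the passage from a ratio bound to a cross-multiplied bound is done pointwise so that the probabilistic conclusion transfers without extra assumptions. Because $LB_M^+$ and $LB_J^+$ are sums of nonnegative processing times (hence nonnegative random variables, and positive almost surely whenever $\mathcal{T}$ is not a.s.\ zero), this cross-multiplication is legitimate on every sample path of positive probability, and the trivial case where $\mathcal{T} \equiv 0$ makes both sides identically zero so the statement holds vacuously.
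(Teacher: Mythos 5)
Your argument is correct and is essentially the paper's own: the paper justifies the corollary in one line by noting that $OPT \leq UB$ and invoking Theorem~\ref{Teo:AsymptoticRatio}, which is exactly the event-inclusion and monotonicity step you spell out. Your additional care about positivity of $LB_M^+$ and $LB_J^+$ is a harmless (and reasonable) elaboration, not a different route.
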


Surprisingly, Theorem~\ref{Teo:AsymptoticRatio} also implies an asymptotic result for the approximation ratio of any algorithm, since the ratios in Theorem~\ref{Teo:AsymptoticRatio} provide upper bounds for the approximation ratio. 

\begin{corollary}
Let $N_0, M_0 \in \mathbb{N}$ and let $\mathcal{T}$ be a random variable with support in $[a,b]$ and mean $\mu$. Any algorithm $A$ satisfies 
\begin{align}
\lim_{N \to \infty} \mathbb{P} \left( \alpha_A( \mathcal{I}(N, M_0, \mathcal{T}) ) \leq \frac{b}{\mu} \right) &= 1 && \label{AlgApprox:Ninf}\\
\lim_{M \to \infty} \mathbb{P} \left( \alpha_A( \mathcal{I}(N_0, M, \mathcal{T}) ) \leq \frac{b}{\mu} \right) &= 1  && \label{AlgApprox:Minf}
\end{align}
\end{corollary}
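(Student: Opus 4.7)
The plan is to observe that the corollary follows almost immediately from Theorem~\ref{Teo:AsymptoticRatio} once we realize that the ratio $UB/LB_M^+$ (respectively $UB/LB_J^+$) dominates the approximation ratio $\alpha_A$ of any algorithm $A$ on the same instance. So the main task is to justify two pointwise inequalities and then invoke the theorem.

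First, for any instance $I$ and any algorithm $A$, the output $A(I)$ is the makespan of some valid permutation $\pi$, hence by Proposition~\ref{Prop:NewBounds} we have $A(I) = C_{max}(\pi) \leq b(N + M - 1) = UB(I)$. Second, since $LB_M^+$ and $LB_J^+$ are genuine lower bounds for the PFSP, we have $OPT(I) \geq LB_M^+(I)$ and $OPT(I) \geq LB_J^+(I)$. Combining these two facts gives, deterministically on every instance,
\begin{align*}
\alpha_A(I) \;=\; \frac{A(I)}{OPT(I)} \;\leq\; \frac{UB(I)}{LB_M^+(I)} \qquad \text{and} \qquad \alpha_A(I) \;\leq\; \frac{UB(I)}{LB_J^+(I)}.
\end{align*}

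With these inclusions in place, the statement becomes a trivial consequence of monotonicity of probability: if $E_1 \subseteq E_2$ then $\mathbb{P}(E_1) \leq \mathbb{P}(E_2)$. Concretely, the event $\{UB/LB_M^+(\mathcal{I}(N, M_0, \mathcal{T})) \leq b/\mu\}$ is contained in the event $\{\alpha_A(\mathcal{I}(N, M_0, \mathcal{T})) \leq b/\mu\}$, so by Theorem~\ref{Teo:AsymptoticRatio} the latter probability also tends to $1$ as $N \to \infty$. The same argument using $LB_J^+$ handles the regime $M \to \infty$ with $N_0$ fixed.

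There is really no obstacle here; the only subtle point is that this statement holds for \emph{any} algorithm, which might look surprising, but the reason is that the worst-case arises from the randomness of the instance, not from the cleverness of the algorithm: the upper bound $UB$ is algorithm-independent and the lower bounds $LB_M^+, LB_J^+$ are instance-dependent but below $OPT$, so the sandwich $LB \leq OPT \leq A(I) \leq UB$ forces the ratio to concentrate. I would therefore keep the write-up to a single short paragraph invoking the pointwise sandwich and Theorem~\ref{Teo:AsymptoticRatio}.
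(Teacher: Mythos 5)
Your proposal is correct and follows essentially the same route as the paper: both establish the pointwise domination $\alpha_A(I) = A(I)/OPT(I) \leq UB(I)/LB_M^+(I)$ (respectively with $LB_J^+$) and then invoke Theorem~\ref{Teo:AsymptoticRatio}. Your write-up merely spells out more explicitly why $A(I) \leq UB(I)$ and the monotonicity-of-probability step, which the paper leaves implicit.
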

\begin{proof}
By definition
\begin{align*}
    \alpha_A( \mathcal{I}(N, M_0, \mathcal{T}) ) &= \frac{A}{OPT}(\mathcal{I}(N, M_0, \mathcal{T}))\\
    &\leq \frac{UB}{LB_M^+}(\mathcal{I}(N, M_0, \mathcal{T}))
\end{align*}
and the result follows from Theorem~\ref{Teo:AsymptoticRatio}. The case where $M$ tends to infinity is analogous. 
\end{proof}


\section{Performance of $LB_{p,s}$}
\label{section:PerformanceLB}

In this section we present the performance of $LB_{p,s}$ with the well-known Taillard and VRF instances for different values of $p$ and $s$. 
Since the VRF instances are divided into a small group ($N \leq 60$) and a large group ($N \geq 100$), we applied the same criterion to partition the Taillard instances, enabling a consistent comparison for instances of comparable size. 
However, since $p + s$ appears as an exponent of $N$ in the time complexity (Proposition~\ref{prop:Complexity}), we restrict our experiments to $p + s \le 4$ for the small group and $p + s \leq 3$ for the large group. 
Additionally, we consider a lower bound \textit{Best}, which, for each instance selects the maximum among all $LB_{p,s}$. 
By Proposition~\ref{prop:BoundBehaviour}, this is equivalent to taking the maximum over all bounds with $p + s = 4$ for the small group and $p + s = 3$ for the large group. 
All lower bounds were implemented in C++ 
and executed on an Intel Xeon E5-2620 v2 2.1 GHz processor with 32 GB of RAM running on a Linux system. The code was compiled using g++ 11.4.0 with -O2 optimization. 
The source code and the lower bounds obtained in these experiments, including the execution times, are publicly available~\cite{SourceCode}. 

The results are compared with the best lower bounds provided by the official sources 
and with those presented in a recent study~\cite{LowerBoundsNew}, which, to the best of our knowledge, provides the state-of-the-art lower bounds for the Taillard instances and the small group of VRF instances. 

Table~\ref{Table:PerformanceLBTaillard} and Table~\ref{Table:PerformanceLBVRF} summarize the results for the Taillard and VRF instances, respectively.
For each lower bound, the tables report the number of instances in which it improved, matched, or worsened the previous lower bound. Additionally, we include the \textit{average relative percentage deviation (ARPD)}, which represents the mean of the \textit{relative percentage deviation (RPD)}, defined as $100 \dfrac{LB_{p,s}(I) - PLB(I)}{PLB(I)}$ for each instance $I$. Here, $PLB(I)$ denotes the previous best-known lower bound for instance $I$.

From the tables, we  observe the strong performance of $LB_{2, 2}$ with the small Taillard instances and $LB_{1, 2}$ with the large VRF instances. 
In the small VRF group, the best performing lower bound is $LB_{1,3}$, which improved most instances; however, the negative ARPD value indicates that, although it improved some instances, the magnitude of improvement was significantly smaller than the deterioration observed in others. 
Selecting the best value across all lower bounds significantly improves the ARPD in all the groups, suggesting that the bounds complement each other. Specifically, for each lower bound, there are instances where it performs better than the others and instances where another bound outperforms it. 

\begin{table}[]
\centering
\begin{tabular}{c|cccc|cccc|}
\multicolumn{9}{c}{Taillard} \\ 
\multirow{2}{*}{} & \multicolumn{4}{c|}{Small}                            & \multicolumn{4}{c|}{Large}                            \\ \hline
                  & $\uparrow$ & $\leftrightarrow$ & $\downarrow$ & ARPD & $\uparrow$ & $\leftrightarrow$ & $\downarrow$ & ARPD \\
$LB_{1,1}$ & 12 & 3 & 45 & -0.8807 & 21 & 1 & 38 & -0.1375\\
$LB_{2,1}$ & 39 & 2 & 19 & 0.1829 & 42 & 1 & 17 & 0.0763\\
$LB_{1,2}$ & 31 & 2 & 27 & -0.0815 & 47 & 2 & 11 & 0.1289\\
$LB_{3,1}$ & 47 & 2 & 11 & 0.8408 & - & - & - & -\\
$LB_{2,2}$ & 54 & 1 & 5 & 0.8952 & - & - & - & -\\
$LB_{1,3}$ & 35 & 2 & 23 & 0.4246 & - & - & - & -\\ \hline
Best & 55 & 1 & 4 & 1.2751 & 57 & 1 & 2 & 0.2493 \\ \hline
\end{tabular}
    \caption{Performance of the prefix-suffix lower bounds with the Taillard instances. The table shows, for each lower bound and instance group, the number of instances where the bound improved ($\uparrow$), remained equal ($\leftrightarrow$), or worsened ($\downarrow$), along with the average relative percentage deviation (ARPD)}
    \label{Table:PerformanceLBTaillard}
\end{table}

\begin{table}[]
\centering
\begin{tabular}{c|cccc|cccc|}
\multicolumn{9}{c}{VRF} \\ 
\multirow{2}{*}{} & \multicolumn{4}{c|}{Small}                            & \multicolumn{4}{c|}{Large}                            \\ \hline
                  & $\uparrow$ & $\leftrightarrow$ & $\downarrow$ & ARPD & $\uparrow$ & $\leftrightarrow$ & $\downarrow$ & ARPD \\
$LB_{1,1}$ & 29 & 0 & 211 & -4.3840 & 212 & 13 & 15 & 0.3132\\
$LB_{2,1}$ & 90 & 2 & 148 & -2.8163 & 237 & 3 & 0 & 0.6207\\
$LB_{1,2}$ & 140 & 1 & 99 & -1.9144 & 235 & 0 & 5 & 0.7035\\
$LB_{3,1}$ & 133 & 0 & 107 & -1.7209 & - & - & - & -\\
$LB_{2,2}$ & 177 & 0 & 63 & -0.7192 & - & - & - & -\\
$LB_{1,3}$ & 178 & 0 & 62 & -0.6193 & - & - & - & -\\ \hline
Best & 190 & 0  & 50 & -0.1392 & 240 & 0 & 0  & 0.7772 \\ \hline
\end{tabular}
    \caption{Performance of the prefix-suffix lower bounds with the VRF instances. The table shows, for each lower bound and instance group, the number of instances where the bound improved ($\uparrow$), remained equal ($\leftrightarrow$), or worsened ($\downarrow$), along with the average relative percentage deviation (ARPD)}
    \label{Table:PerformanceLBVRF}
\end{table}


\section{Concluding remarks}
\label{section:Conclusion}

This work presents a general scheme to derive lower bounds for the PFSP based on RD-paths. 
Using this framework, we proposed the prefix-suffix bound $LB_{p, s}$, which has a time complexity of $O(N^{p  + s}M(p + s))$ and improves several of the best-known lower bounds for the widely used Taillard and VRF benchmark instances.

We also introduced simple lower and upper bounds based on the maximum and minimum execution times of an instance.  
These were used to estimate the number of distinct makespan values, yielding tighter estimates in several cases compared to the previous estimation by Heller.   
Moreover, the upper bound is used to prove that for a fixed number of jobs or machines, the probability that $\frac{b}{\mu} LB_M^+ \geq OPT$ goes to one as the other parameter increases. 
This represents important progress on Taillard's conjecture, supporting its possible validity. 
Similarly, we proved that the probability that the approximation ratio of any algorithm is at most $\frac{b}{\mu}$ goes to one. 
For the case of a uniform distribution, commonly used in benchmark instance generation, this bound simplifies to $\frac{b}{\mu} \leq 2$. 

This research opens two interesting lines for future work. 
First, within the lower bound framework, it would be valuable to identify families of RD-paths that admit an efficient solution to expression~(\ref{Inequality:MinMax}). 
Second, further investigation is needed to prove or disprove Taillard's conjecture, or to extend the current asymptotic results to cases involving unbounded distributions with finite mean and variance.

\bibliographystyle{unsrtnat}
\bibliography{references}  

\appendix
\section{Limit analysis of Theorem~\ref{Teo:AsymptoticRatio}}
\label{appendix:LimitAnalysis}

Let $\epsilon > 0$ and $\lambda > 0$. We want to show that there exists $N_0$ such that for all $N > N_0$, the following holds $\mathbb{P} \left(\left| \frac{(N + M_0 - 1)b}{N} \frac{1}{ \frac{1}{N} \sum_j T_{1,j} } - \frac{b}{\mu} \right| > \epsilon \right) < \lambda$. 

Since $\lim_{N \to \infty} \frac{(N + M_0 - 1)b}{N} =  b$, for a given $\epsilon_b > 0$ there exists $N_b$ such that for all $N > N_b$ we have 
\begin{align}
    \left| \frac{(N + M_0 - 1)b}{N} - b \right| = \frac{(N + M_0 - 1)b}{N} - b < \epsilon_b && \label{b_bound}
\end{align}

Additionally, for a given $\epsilon_{\mu} > 0$, since $\lim_{N \to \infty} \mathbb{P} \left( \left| \frac{1}{N} \sum_j T_{1,j} - \mu  \right| > \epsilon_{\mu} \right) = 0$, there exists $N_{\mu}$ such that for all $N > N_{\mu}$ we have 
\begin{align}
\mathbb{P} \left( \left| \frac{1}{N} \sum_j T_{1,j} - \mu \right| > \epsilon_{\mu} \right) < \lambda && \label{mu_bound}
\end{align}

Let $N_0 = \max \{N_b, N_{\mu}\}$. Let $A$ be the event $\left| \frac{(N + M_0 - 1)b}{N} \frac{1}{ \frac{1}{N} \sum_j T_{1,j} } - \frac{b}{\mu} \right| > \epsilon$ and $B$ the event $ \left| \frac{1}{N} \sum_j T_{1,j} - \mu \right| \leq \epsilon_{\mu}$. 
For all $N > N_0$, we can write $\frac{(N + M_0 - 1)b}{N}$ as $b + \eta_b$ for some $\eta_b \in (0, \epsilon_b)$ and conditionally to event $B$ we can write $\frac{1}{N} \sum_j T_{1,j}$ as $\mu + \eta_\mu$ for some $\eta_\mu \in [-\epsilon_\mu, \epsilon_\mu]$. 
Therefore, using the law of total probability

\begin{align*}
    \mathbb{P} \left( A \right) &= \mathbb{P} \left( A \mid B \right) \mathbb{P}(B) + \mathbb{P} \left( A \mid B^c \right) \mathbb{P}(B^c)\\
    &\leq\mathbb{P} \left( A \mid B \right) \mathbb{P}(B)  + \mathbb{P}(B^c)\\
    &\leq  \mathbb{P} \left( A \mid B \right) \mathbb{P}(B) + \lambda \\
    &= \mathbb{P} \left( \left| \frac{b + \eta_b}{\mu + \eta_\mu} - \frac{b}{\mu} \right| > \epsilon \Big| B \right)\mathbb{P}(B) + \lambda
\end{align*}

Since we can set $\epsilon_b$ and $\epsilon_{\mu}$ arbitrarily small and this restricts the values $\eta_b$ and $\eta_\mu$, we can make $\mathbb{P} \left( \left| \frac{b + \eta_b}{\mu + \eta_\mu} - \frac{b}{\mu} \right| > \epsilon \Big| B \right) = 0$ and the result follows.

\end{document}